\newtheorem{cor}{Corollary}
\newtheorem{lem}{Lemma}
\newcommand{\matr}[1]{{\boldsymbol{#1}}}
\renewcommand{\vec}[1]{{\boldsymbol{#1}}}
\newcommand{\EL}{\mathcal{M}}
\newcommand{\nn}{{N_n}}
\newcommand{\nl}{{N_e}}
\newcommand{\eye}{\mathds{1}}
\newcommand{\Bd}{\mathfrak{B}}
\begin{document}

\title{Unified algebraic deviation of distribution factors in linear power flow}

\author{Joost van Dijk}%
 \email{joost.van.dijk@tennet.eu}
  \affiliation{TenneT TSO BV, Utrechtseweg 310, 6812 AR
   Arnhem, The Netherlands}

\author{Nico Westerbeck}
  \affiliation{InstaDeep, Kemperplatz 1, 10785 Berlin, Germany}
  \affiliation{Elia Group, Heidestraße 2, 10557 Berlin, Germany}

\author{Lars Schewe}
  \affiliation{University of Edinburgh, School of Mathematics and Maxwell Institute for Mathematical Sci-
ences, James Clerk Maxwell Building, King’s Buildings, Peter Guthrie Tait Road, Edinburgh, EH9 3FD,
UK}

\author{Andrea Benigni}
  \affiliation{Forschungszentrum J\"ulich, Institute of Climate and Energy Systems -- Energy System Engineering (ICE-1), 52428 J\"ulich, Germany}
  \affiliation{RWTH Aachen University, Aachen, 52062, Germany}
  \affiliation{JARA-Energy, Jülich, 52425, Germany}

\author{Dirk Witthaut}
 \email{d.witthaut@fz-juelich.de}
  \affiliation{Forschungszentrum J\"ulich, Institute of Climate and Energy Systems -- Energy System Engineering (ICE-1), 52428 J\"ulich, Germany}
  \affiliation{Institute for Theoretical Physics, University of Cologne, K\"oln, 50937, Germany}
  \affiliation{JARA-Energy, Jülich, 52425, Germany}

\begin{abstract}
Distribution factors are indispensable tools in the design and analysis of power transmission grids. Recently, they received a renewed interest in the field of topology optimization, leading to the definition of bus merge and bus split distribution factors.
In this article, we introduce a unified derivation of the most relevant distribution factors based on matrix algebraic manipulations. This approach facilitates the generalization to more complex grid modification, in particular simultaneous switching events or bus splits. 
\end{abstract}
  
\maketitle

\section{Introduction}

Distribution factors are ubiquitous in the analysis and operation of power transmission grids and have been used for decades~\cite{landgren1972transmission}. Power transfer distribution factors (PTDFs) and line outage distribution factors (LODFs) are introduced in various text books such as~\cite{wood2013power}. They describe the change of real power flows after the change of power injections (PTDFs) or line outages (LODFs). More recently, bus merge and bus split distribution factors were introduced in~\cite{goldis2016shift,van2023bus}. 

The impact of topology changes is commonly evaluated by mapping the problem to PTDFs. For instance, a line outage can be treated by adding fictitious power injections at the terminal buses of the line. This approach is very instructive and thus used in text books such as~\cite{wood2013power}. However, it comes at the expense of a heavy notation, in particular in the case of bus split distribution factors~\cite{van2023bus}, which may impede the generalization to more complex situations. 

The ongoing transition of the electric power system entails great challenges for flexibility~\cite{entsoe_position_flex} and stability~\cite{milano2018foundations}. Grid planning and operation require novel tools to face these challenges~\cite{marot2022perspectives, viebahn24}. For instance, congestion is rapidly increasing in the European transmission grid, causing costs in the billions of Euros every year~\cite{titz2024identifying}. Dynamic topology reconfiguration may reduce congestion with existing infrastructures~\cite{subramanian2021exploring}, but generally leads to hard optimization problems. Distribution factors are widely used in planning and optimization and should be able to efficiently describe arbitrary changes in the grid topology. 

In this article we introduce a unified algebraic derivation of distribution factors, including line modifications, line outages, line closing, bus splits and bus mergers. The common idea is to formulate any modification as a low-rank matrix update and then apply the Woodbury matrix identity and related tools generalizing ideas presented in~\cite{alsac1983sparsity}.
The approach is highly flexible and thus readily generalized to multiple simultaneous modifications of the grid. We are confident that this unified approach promotes the applicability of distribution factors, in particular in dynamic topology reconfiguration, and enables further generalizations.  

The article is organized as follows. We provide a brief overview over the relevant literature in Sec.~\ref{sec:literature} and introduce the background and the necessary tools from algebraic graph theory in Sec.~\ref{sec:background}. In Sec.~\ref{sec:single}, we discuss modifications of a single branch or bus, starting from line modification and line outages and then preceding to bus merges and bus splits. We generalize these results to multiple branch modifications in Sec.~\ref{sec:multiple}.


\section{Literature overview}
\label{sec:literature}

The linear power flow or DC approximation is widely used in power system analysis and design~\cite{wood2013power}. This approximation is applicable if voltages are close to the set point and losses and grid loads are not too high. It applicability has been investigated numerically in ~\cite{purchala2005usefulness,stott2009dc} and analytically in~\cite{hartmann2024synchronized}. The application of DC power flow and various distribution factors in unit commitment models was reviewed in \cite{van2014dc}, including some topological aspects such as the treatment of phase shifting transformers.

Different algebraic methods for treating topology modifications were discussed in the literature. Sparsity is a common theme in linear power flow and can be exploited to improve computation efficiency~\cite{tinney1985sparse}. The application of sparse methods in the analysis of the topology changes was pioneered in~\cite{alsac1983sparsity,kim1985contingency}. An early review of sparse matrix methods was provided in \cite{stott1987overview}.

A purely algebraic derivations of LODFs for multiple outages was presented in~\cite{guler2007generalized} and  \cite{guo2009direct}, where the latter article focuses on the application of the Woodbury matrix identity or inverse matrix modification lemma.  Similar methods where used in~\cite{schaub2014structure,ranjan2014incremental} to address problems in general flow networks beyond power grids, including in particular the effective resistance of graphs. Graph theoretical aspects of this approach were discussed in~\cite{strake2019non,kaiser2020collective}, including the scaling of LODFs with distance and the analysis of collective effects of multiple outages. Line closings have been treated in~\cite{sauer2001extended}. Extensions of the Woodbury matrix identity to pseudo inverses relevant for network flow problems were treated in~\cite{meyer1973generalized}. The case of network splits under multiple outages was treated in~\cite{zocca2021spectral}. An alternative approach to topology changes based on the superposition approach was presented~\cite{marot2023extended}. 

A graph dual approach to PTDFs and LODFs has been introduced in~\cite{ronellenfitsch2016dual,ronellenfitsch2017dual}. This approach shifts the attention from buses to branches and carries out all calculations in terms of real power flows instead of phase angles. This formulation is explicitly useful in linear optimal power flow problems~\cite{carvalho1988optimal,horsch2018linear}.

Distribution factors are formulated in terms of the inverse of the nodal susceptance matrix. Topology modifications are then described by low-rank updates of the inverse as described above. Alternatively, one can work on the level of the linear set of equations that constitute the DC approximation. In practice, such linear equations are solved via an LU decomposition of the nodal susceptance matrix~\cite{tinney1967direct}. Efficient formula to update this LU decomposition after a low-rank change of the matrix $\matr B$ are available~\cite{bennett1965triangular,suhl1993fast,irisarri1981automatic}.
Partial matrix factorizations to update for outages  and other topology changes were treated in~\cite{chan1986partial}.

\section{Background and Notation}
\label{sec:background}

In this section we summarize the fundamental equations and introduce some useful notation. Linear power flow describes the flow of real power in high voltage grids, neglecting Ohmic losses as well as changes in the voltage magnitude~\cite{wood2013power,witthaut2022collective}. The power flow equations are linearized, such that the flow depends linearly on the difference of the nodal voltage phase angles. 

So let $\theta_n$ denote the voltage phase angle at a bus or node $n$. The real power flow over a branch or edge $(n,k)$ reads reads
\begin{align}
    f_{n \rightarrow k} = b_{nk} (\theta_n  - \theta_k),
    \label{eq:flow-single-branch}
\end{align}
where $b_{nk}$, is the effective susceptance of the branch $(n,k)$. The flows have to satisfy the energy conservation law or Kirchhoff current law at every bus or node, 
\begin{align}
    p_n = \sum_{k} f_{n \rightarrow k} .
    \label{eq:kcl-single-node}
\end{align}
where $p_n$ is the power injection as bus $n$, i.e. the difference of power generation and consumption including the consumption by shunt elements. We will always assume that the grid is balanced, i.e. that $\sum_n p_n = 0$. The two equations \eqref{eq:flow-single-branch} and \eqref{eq:kcl-single-node}
fully determine the real power flow in the grid. The validity of the linear power flow or DC approximation was studied in detail in ~\cite{purchala2005usefulness,stott2009dc,hartmann2024synchronized}.

We introduce a compact vectorial notarial that will facilitate the further analysis. Buses or nodes are labeled consecutively as $n = 1,\ldots,\nn$ and all power injections are summarized in a vector $\vec p = (p_1, \ldots , p_\nn)^\top$. Branches or edges are either labeled by their endpoints $(n,m)$ or consecutively as $e = 1,\ldots,\nl$. To keep track of the direction of power flows we fix an orientation for each edge.  
The grid topology and the orientation is summarized in the node-edge incidence matrix $\matr E \in \mathbb{R}^{\nn \times \nl}$ with components \cite{Newm10}
\begin{equation}
   E_{n,e} = \left\{
   \begin{array}{r l}
      + 1 & \; \mbox{if line $e$ starts at node $n$},  \\
      - 1 & \; \mbox{if line $e$ ends at node $n$},  \\
      0     & \; \mbox{otherwise}.
  \end{array} \right.
  \label{eq:def-nodeedge}
\end{equation}
The real power flows are summarized in a vector $\vec f = (f_1,\ldots, f_\nl)^\top$ and the branch susceptances in the diagonal matrix $\matr \Bd = \mbox{diag}(b_1,\ldots,b_\nl)$.


The two defining equations of the linear power flow equations are then written in vectorial form as
\begin{align}
    \vec f &=  \matr \Bd  \matr E^\top \vec \theta, \\
    \vec p &= \matr E \vec f.
\end{align}
Substituting the first equation into the second one thus yields a linear equation connecting power injections and voltage phase angles,
\begin{align}
    \vec p = \matr E \matr \Bd  \matr E^\top \vec \theta  = \matr B \vec \theta \, .
    \label{eq:DC-pBt}
\end{align}
The nodal susceptance matrix $\matr B \in \mathbb{R}^{\nn \times \nn}$ can we written in components as
\begin{equation}
   B_{n,k} = \left\{
   \begin{array}{r l}
         \sum_{l=1}^\nn b_{nl}  & \; \mbox{for} \, n = k, \\
         - b_{nk} & \; \mbox{if $n\neq m$ and $n$, $m$ are adjacent},  \\
      0     & \; \mbox{otherwise}.
  \end{array} \right.
  \label{eq:def-Laplacian}
\end{equation}
Technically, $\matr B$ is a Laplacian matrix \cite{Newm10}. It always has one zero eigenvalue with the corresponding eigenvector $(1,1,\ldots,1)^\top$ such that it is not invertible. This is no major problem, the set of equations \eqref{eq:DC-pBt} is solvable since we  assume that the grid is balanced $\sum_n p_n=0$. In practice, we can use the Moore-Penrose pseudo-inverse $\matr B^{+}$. In the current setup, the pseudo-inverse can be calculated as \cite{ranjan2014incremental}
\begin{equation}
  \matr{B}^+ = \left( \matr{B} + \frac{1}{N_n} \matr{J}\right)^{-1} 
      -\frac{1}{N_n} \matr{J},
\end{equation}
where $\matr{J} \in \mathbb{R}^{N_n \times N_n}$ is a matrix of all 1s.

One can avoid using the pseudo-inverse by fixing a slack bus $k$ where the voltage is fixed as $\theta_k=0$. We then remove the $k$th row of the power injection vector $\vec p$, the phase angle vector $\theta$ and the node-edge incidence matrix $\matr E$. The nodal susceptance matrix $\matr B = \matr E \matr \Bd \matr E^\top$ then has dimension $(N_n-1) \times (N_n-1)$ and is of full rank if the grid is connected. In the mathematical literature, $\matr B$ is referred to as a grounded Laplacian. We can thus work with the ordinary inverse $\matr B^{-1}$ instead of the pseudo-inverse. In the remainder of this article we will follow this definition for the sake of simplicity.

We emphasize that the inverse of the grounded Laplacian $\matr B^{-1}$ or equivalently the pseudo-inverse of the Laplacian encodes the essential aspects of the power grid as it relates the state variables to the power injections, 
\begin{align*}
    \vec \theta = \matr B^{-1} \vec p. 
\end{align*}
Hence, our analysis will mainly focus on the inverse $\matr B^{-1}$. We can further augment this matrix to describe the relation of flows and power injections and flows,
\begin{align}
    \vec f = \underbrace{\matr \Bd \matr E^\top \matr B^{-1}}_{=: \matr{PTDF} } \vec p.
    \label{eq:def-ptdf}
\end{align}
The matrix $\matr{PTDF}$ is referred to as the matrix of power transfer distribution factors.

A summary of the symbols and variables used in this article is provided in Table~\ref{tab:notation} to improve the readability.

\begin{table}[tb]
\caption{
List of symbols and  variables. Vectors are written as boldface lowercase  letters and matrices as boldface uppercase case letters. 
\label{tab:notation}
}
\begin{tabular}{p{1cm} p{7cm}}
  \hline
  $b_{e}$ & effective susceptance of branch $e$\\
  $\Delta b_{e}$ & change of effective susceptance of branch $e$\\
  $\matr \Bd$ & diagonal matrix of all branch susceptances\\
  $\matr B_x$ & Nodal susceptance matrix for the grid topology $x$\\
  $e$, $\ell$ & indexing variables typically used for branches\\
  $\matr E$ & node-edge incidence matrix \\
  $f_e$ & real-power flows over branch $e$\\
  $\vec f$ & vector of all real-power flows\\
  $\matr J$ & matrix of all 1s\\
  $n,k,l$ &  indexing variables typically used for buses\\
  $\nn$ & number of buses or nodes in a grid\\
  $\nl$ & number of branches or edges in a grid\\
  $\vec \nu_e$ & vector with $\pm 1$ at terminal ends of branch $e$\\
  $p_n$ & real power injection at bus $n$\\
  $\vec p$ & vector summarizing all real power injections\\
  $\theta_n$ & voltage phase angle at bus $n$\\
  $\vec \theta$ & vector of all voltage phase angles\\
  $\eye$ & unit matrix \\
  $\vec u_n$ & $n$th standard unit vector\\
  ${}^\top$ & superscript denoting matrix transpose\\
  \hline
\end{tabular}
\end{table}

\section{Single branch modifications}
\label{sec:single}

\subsection{Branch modifications and line outages}

In this section we review basic methods to deal with finite changes at a single branch of a grid such as a line outage. While the resulting equations are standard tools in power engineering, we focus on a purely algebraic deviation which facilitates the generalization in the following sections.

So assume that the initial reference grid is described by the grounded Laplacian $\matr B_r$. Then the grid is subject to a modification of an arbitrary branch $e$ such that the effective susceptance changes as 
\begin{align}
    b_e \rightarrow \tilde b_e = b_e + \Delta b_e.
\end{align}
In case of a line outage we have $\tilde b_e = 0$ or, equivalently, $\Delta b_e = - b_e$.

This entails a change of the matrix $\Bd$ and the grounded  Laplacian matrix
\begin{align*}
    \matr \Bd_r &\rightarrow \matr \Bd_m = \matr \Bd_r + \Delta b_e \vec u_e \vec u_e^\top. \\
    \matr B_r &\rightarrow \matr B_m  = \matr B_r + \Delta  b_e \vec \nu_e \vec \nu_e^\top ,
\end{align*}
where we use the subscripts $r$ and $m$ to denote the reference and the modified topology, respectively. Here and in the following, $\vec u_e$ denotes the $e$th standard basis vector in the space of branches and $\vec \nu_e = \matr E \vec u_e$. Physically, the vector $\vec \nu_e$ describes a unit power injection at the from end of branch $e$ and a unit power ejection at the to end of branch $e$.

We evaluate how this topology change affects state variables and power flows assuming that the power injection vector $\vec p$ remains unchanged. To this end, we compute the inverse of the grounded Laplacian using the Sherman–Morrison formula~\cite{hager1989updating}.
\begin{align}
    \matr B_m^{-1} &= (\matr B_r+ \Delta b_e \vec \nu_e \vec \nu_e^\top)^{-1}  
    \nonumber \\
    &= \matr B_r^{-1}   -  \matr B_r^{-1} \vec \nu_e  \Delta b_e 
    (1+\Delta b_e  \vec \nu_e^\top  \matr B_r^{-1}  \vec \nu_e )^{-1} 
    \vec \nu_e^\top \matr B_r^{-1} 
    \label{eq:sm-single-finite}
\end{align}
We can now directly read of the PTDF matrix for the new grid topology,
\begin{align}
    \label{eq:ptdf-single-mod} 
    \matr{PTDF}_m &= \matr \Bd_m \matr E^\top \matr B_m^{-1} \\
    &= \left[ \eye +  \frac{\Delta b_e}{b_e} \vec u_e \vec u_e^\top \right]
    \matr{PTDF}_r 
    \nonumber \\
    & \quad  \times \left[ \eye - 
    \frac{\Delta b_e}{1+\Delta b_e  \vec \nu_e^\top  \matr B_r^{-1}  \vec \nu_e}
    \vec \nu_e  \vec \nu_e^\top \matr B_r^{-1} \right]. \nonumber
\end{align}
Furthermore, we can also discuss the impacts of the topology modification on the level of the real power flows. We denote the flows in the reference grid topology as $\vec f_r$ and the flows in the modified topology as $\vec f_m$. Using equation \eqref{eq:def-ptdf} and $\vec p = \matr E \vec f_r$, we obtain
\begin{align*}
    \vec f_m &= \matr \Bd_m \matr E^\top \matr B_m^{-1} \matr E \vec f_r  \\
    &= \matr \Bd_m \matr E^\top \matr B_r^{-1} \matr E \vec f_r \\
    & \quad - \frac{\Delta b_e} 
    {1+\Delta b_e  \vec \nu_e^\top  \matr B_r^{-1}  \vec \nu_e}
    \matr \Bd_m \matr E^\top \matr B_r^{-1} \vec \nu_e  
    \vec \nu_e^\top \matr B_r^{-1} \matr E \vec f_r  \\
    &= \left( \eye + \frac{\Delta b_e}{b_e} \vec u_e \vec u_e^\top \right) \vec f_r \\
    & \quad - \frac{\Delta b_e/b_e} 
    {1+\Delta b_e  \vec \nu_e^\top  \matr B_r^{-1}  \vec \nu_e}
    \matr \Bd_m \matr E^\top \matr B_r^{-1} \vec \nu_e  
    \vec u_e^\top \vec f_r
\end{align*}
In the last step, we have used the relations
\begin{align*}
     \matr \Bd_m &= \left( \eye + \frac{\Delta b_e}{b_e} \vec u_e \vec u_e^\top \right) \matr \Bd_r\\
    \matr \Bd_r \matr E^\top \matr B_r^{-1} \matr E \vec f_r &= \vec f_r \\
    \vec u_e^\top \matr E^\top \matr B_r^{-1} \matr E \vec f_r  
    &= b_e^{-1} \vec u_e^\top  \matr \Bd_r \matr E^\top \matr B_r^{-1} \matr E \vec f_r
    = b_e^{-1} \vec u_e^\top \vec f_r.
\end{align*}
The changes of the real power flows are particularly interesting in the case of a line outage, hence we now set $\Delta b_e = - b_e$. Then the previous equation can be rewritten as
\begin{align}
    \vec f_m &= 
     \vec f_r + \underbrace{
     \left( \frac{1}{\scriptstyle 1- b_e  \vec \nu_e^\top  \matr B_r^{-1}  \vec \nu_e} 
            \matr \Bd_m  \matr E^\top \matr B_r^{-1}   \matr E 
     - \eye  \right) \vec u_e 
     }_{=:\matr{LODF}_{:,e}}
     \vec u_e^\top \vec f_r
     \label{eq:lodf-def}
\end{align}
Here, $\matr{LODF}_{:,e}$ denotes the $e$th column of the matrix of line outage distribution factors. 

To conclude the treatment of line outages we rewrite the results in terms of the power transfer distribution factors to enable a direct comparison to the literature. The PTDFs of the references grid are written as
\begin{align}
    PTDF^{(r)}_{e,i} &= \vec u_e^\top \matr{PTDF}_r  \, \vec u_i
    = b_e \vec \nu_e^\top \matr{B}_r^{-1} \vec u_i \, . \label{eq:ptdf_ref}
\end{align}
Furthermore, we denote by $\matr{PTDF}^{(r)}_{e,:}$ the $e$th row and $\matr{PTDF}^{(r)}_{:,i}$ the $i$th column of the PTDF matrix. For a branch $e = (i,j)$, Eq.~\eqref{eq:ptdf-single-mod} then assumes the familiar form
\begin{align*}
    &\matr{PTDF}_m = \matr{PTDF}_r \\
    & \qquad - \frac{\matr{PTDF}^{(r)}_{:,i} - \matr{PTDF}^{(r)}_{:,j}}{   
    \frac{b_e}{\Delta b_e } + ({PTDF}^{(r)}_{e,i} - {PTDF}^{(r)}_{e,j}) }
    \matr{PTDF}^{(r)}_{e,:} \, .
\end{align*}
for all rows except for the $e$th row that vanishes completely. The power flow in the modified grid topology then read
\begin{align*}
    \vec f_m = \vec f_r 
    - \frac{\matr{PTDF}^{(r)}_{:,i} - \matr{PTDF}^{(r)}_{:,j}}{   
    \frac{b_e}{\Delta b_e } + ({PTDF}^{(r)}_{e,i} - {PTDF}^{(r)}_{e,j}) }
    \vec u_e^\top \vec f_r \, .
\end{align*}
Notably, $[b_e + \Delta b_e({PTDF}^{(r)}_{e,i} - {PTDF}^{(r)}_{e,j}) ]^{-1} \vec u_e^\top \vec f_r$
tells the angle difference between $i$ and $j$ after the outage.

Finally, we can treat the closing of a transmission line $e$ in a similar way. In this case we have $b_e=0$, but we have to ensure that the line $e$ is included in the incidence matrix $\matr E$ and the diagonal matrix $\matr \Bd$. Then we have 
\begin{align}
    \vec f_m &= 
     \vec f_r + \matr{LCDF}_{:,e} \vec u_e^\top \vec \theta_r
\end{align}
where the line closing distribution factors (LCDF) are defined as
\begin{align}
     \matr{LCDF}_{:,e} = 
     \Delta b_e
     \left( \frac{1}{\scriptstyle 1 + \Delta b_e  \vec \nu_e^\top  \matr B_r^{-1}  \vec \nu_e} 
            \matr \Bd_m  \matr E^\top \matr B_r^{-1}   \matr E 
     - \eye  \right) \vec u_e 
     \label{eq:lcdf-def}
\end{align}
The PTDF matrix in the modified grid topology then reads
\begin{align*}
    &\matr{PTDF}_m = \matr{PTDF}_r \\
    & \qquad - (\matr{PTDF}^{(r)}_{:,i} - \matr{PTDF}^{(r)}_{:,j})\frac{  \vec \nu_e^\top \matr B_r^{-1}}{\frac{1}{\Delta b_e} + \vec \nu_e^\top  \matr B_r^{-1}  \vec \nu_e}
     \, ,
\end{align*}
for all rows except for the $e$th row that requires a more careful treatment.

\subsection{Phase shifting transformers}

Phase-shifting transformers (PSTs) are commonly used to optimize active power flows by setting the phase shift angle~\cite{verboomen2005phase}. In the DC approximation, PSTs can be included in two equivalent ways. First, the effect of a phase shifting transformer $(k,l)$ can be described by effective power injections injections at the terminal buses $k$ and $l$. Let $b_{kl}$ denote the series susceptance of the transformer and $\theta_{kl}^{\rm shift}$ the phase shifting angle. Then the effective power injections are given by 
\begin{align}
    \vec{\hat p} = \vec p - b_{kl} \theta_{kl}^{\rm shift} 
    \, \vec \nu_{kl}.
    \label{eq:pst-injection}
\end{align}
The flow on all branches except for the PST itself are then given by
\begin{align*}
    \vec f = \matr \Bd \matr E^\top \matr B^{-1}
    \vec{\hat p} = 
    \matr{PTDF} \, \vec{\hat p},
\end{align*}
while a correction has to be applied for the branch $(k,l)$ itself (see below). A modification of the PST can be easily combined with any other topology modification. A change of the PST phase angle is described by a change of the effective injection $\vec{\hat p}$, while the topology modification is accounted for by a change of the PTDF matrix.

Alternatively, we can describe the impact of PSTs in terms of Phase Shifter Distribution Factors (PSDF)~\cite{van2014dc}. In this case one does \emph{not} include the PSTs in the effective power injections. We collect all phase shifting angles in a vector $\vec{\theta}^{\rm shift} = (\theta_1^{\rm shift}, \ldots,  \theta_{N_e}^{\rm shift})^\top$, where we set $\theta_e^{\rm shift} = 0$ for every ordinary transmission line $e$. Then the real power flows are given by
\begin{align}
    \vec f &= \matr{PTDF} \, \vec{\hat p} \nonumber \\
    &= \matr{PTDF} \,  \vec{p} + 
             \matr{PSDF} \, \vec{\theta}^{\rm shift}.
\end{align}
with the PSDF matrix
\begin{align*}
    &PSDF_{\ell,e}  \\
    &\quad = \left\{
    \begin{array}{l l}
    b_e - b_e ( PTDF_{{\rm from}(e)} - PTDF_{{\rm to}(e)})
    \;
    & {\rm if} \; \ell=e \\ 
    - b_e ( PTDF_{{\rm from}(e)} - PTDF_{{\rm to}(e)})
    & {\rm if} \; \ell \neq{} e. 
    \end{array} 
    \right.
\end{align*}
The PSDF approach provides a separation between the effects due to the nodal power and the effects due to the PST, which can be useful in optimization problems.
It allows to readily integrate modifications of the PSTs and the grid topology: Changes to the PST phase angle are represented explicitly, while change of the topology are included in the PTDF and the PSDF matrices. These matrices have to be updated according to Eq.~\eqref{eq:ptdf-single-mod} in case of a branch modification.

\subsection{Bus Merge Distribution Factors}

Our previous results are readily extended to describe the closing of a switch or the merging of two busbars. An ideal switch $s$ is is treated as a branch with two possible values of the susceptance: $b_s = 0$ in the open state and $b_s \rightarrow \infty$ in the closed state. We note two important facts before we proceeds. First, the nodal susceptance matrix $\matr B$ diverges in the closed state, but the inverse $\matr B^{-1}$ is well defined as we show below. Second, modeling the closing of a busbar is far easier that modeling its opening. Here we start with the closing and come back to the opening in the following section.

To describe the closing of a switch or the merging of two busbars we start from Eq.~\eqref{eq:sm-single-finite} and carry out the limit $\Delta b_s \rightarrow \infty$, 
\begin{align}
    \matr B_m^{-1} &= 
    \matr B_r^{-1}  \Big[ \eye  -  (\vec \nu_s^\top  \matr B_r^{-1} \vec \nu_s)^{-1}
    \vec \nu_s  \vec \nu_s^\top \matr B_r^{-1} \Big].
    \label{eq:sm-single-finite2}
\end{align}
We can further invoke the limit in Eq.~\eqref{eq:ptdf-single-mod} for the power transfer distribution factors for all branches $e \neq s$,
\begin{align*}
    \widetilde{\matr{PTDF}}_m  = 
    \widetilde{\matr{PTDF}}_r  \left[ \eye + 
    ( \vec \nu_e^\top  \matr B_r^{-1}  \vec \nu_e )^{-1}
    \vec \nu_e  \vec \nu_e^\top \matr B_r^{-1} \right].
    \label{eq:ptdf-single-mod2}
\end{align*}
Here, we use the tilde to denote an incomplete PTDF matrix missing the row $s$. To determine the flow over the closed switch or busbar coupler, we have to proceed differently. Using Kichhoff's current law for the from side of the switch $s$ we find
\begin{align}
    f_s = p_{{\rm from}(s)} - \matr E_{{\rm from}(s),:} \cdot \widetilde{\matr{PTDF}}_m \cdot \vec p .
\end{align}

\subsection{Bus Split Distribution Factors}
\label{sec:bsdf}

\begin{figure*}[tb]
\includegraphics[width=\textwidth,clip,trim=0cm 1.5cm 1cm 0cm]{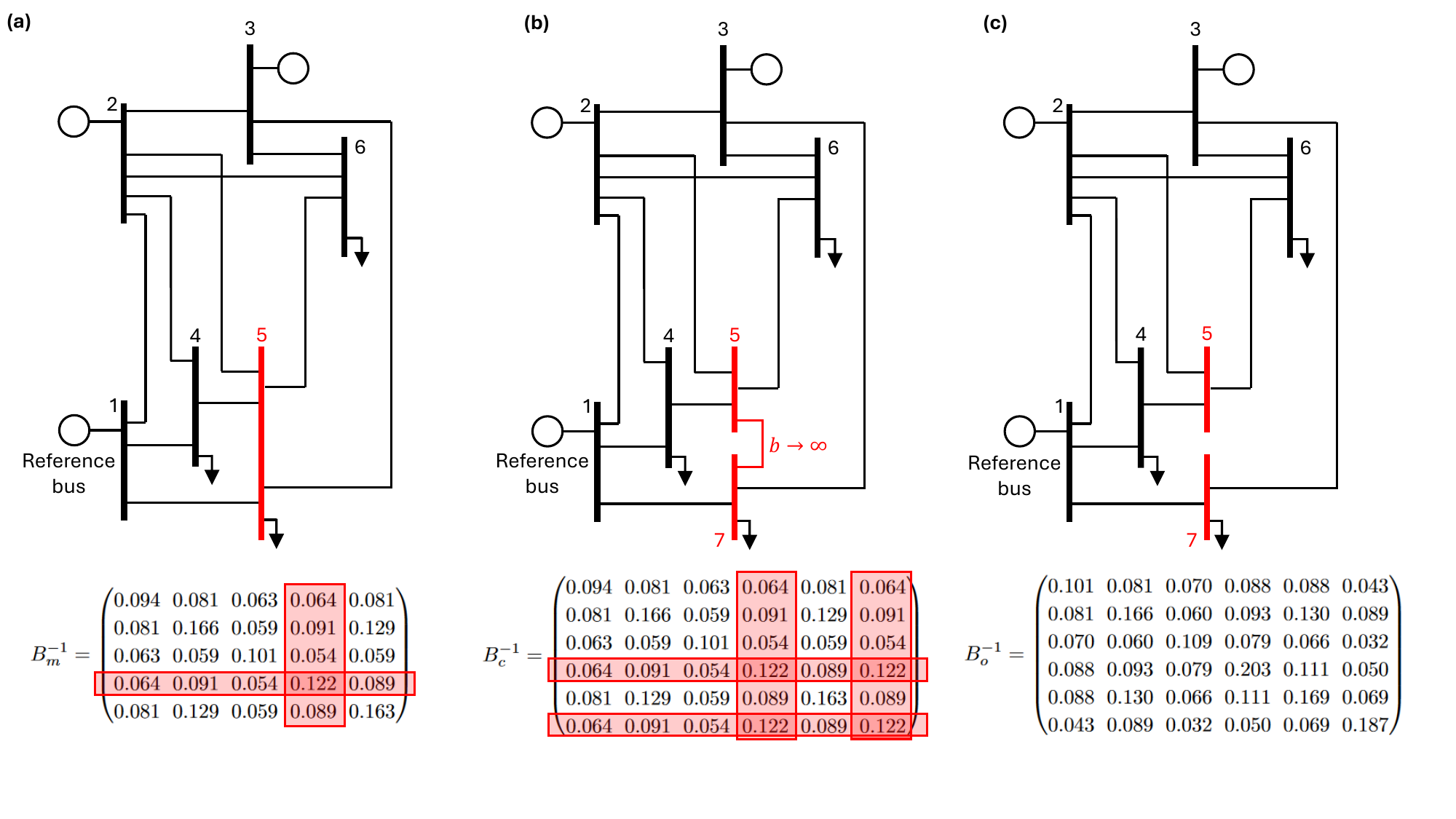}
\caption{
\label{fig:busbar-example}
A bus split in an elementary test grid. Bus number 5 is split into the two buses 5 and 7 (marked in red color).
The three columns show the different grid configurations used in the derivation of the bus split distribution factors:
(a) the `merged' configuration before the split,
(b) the `closed' configuration before the split, and
(c) the `open' configuration after the split.
The bottom row shows the inverse of the grounded Laplacian in the three different configurations $\matr B_{m,c,o}^{-1}$. 
We use the test grid \texttt{case6ww} from the software \texttt{Matpower} 8.0 \cite{zimmerman2010matpower} which is based on an example in \cite{wood2013power}.
}
\end{figure*}

In practice, bus splits are often more important than bus mergers. Consider for example the operation of busbar couplers, which are typically closed. However, it can be advisable in some cases to open one or few busbar couplers to influence the real power flow and eventually mitigate congestion. An elementary example is shown in Fig.~\ref{fig:busbar-example}. In the original grid configuration the maximum real power flow is $44.922$ pu on the branch $(3,6)$. After the busbar 5 has been split, the maximum real power flow is $42.233$ pu on the branch $(1,7)$.

In this section we introduce the algebraic treatment for a single busbar coupler. Initially, the coupler is closed such that the two busbars effectively constitute a single node of the network. Then we want to describe the impact of the opening of the coupler making the two busbars two separate nodes with no direct connection. Three technical challenges have to be dealt with. First, a switch or busbar coupler $s$ can be modeled as a branch with two possible values: $b_s \rightarrow 0$ (open) and $b_s \rightarrow \infty$ (closed). We must formulate the theory in a way that allows to evaluate the two limits in a convenient way. Second, two coupled busbars effectively constitute a single node of the network. If we start our computation from this single-node description, then we have to add a new node to describe the opening of the busbar. Finally, transmission grids must be operated in a $(n-1)$ secure way \cite{wood2013power}. Hence, we have to formulate the theory such that it allows for an efficient $(n-1)$ contingency analysis. 

Let the matrix $\matr B_m$ describe the grid where all switches or busbar couplers are closed and have been merged into a single node each. This configuration will serve as the reference grid. Furthermore, let $\matr B_o$ describe the grid where the busbar coupler has been opened. Hence, we have to describe the two uncoupled busbars as separate nodes such that the dimension of $\matr B_o$ is larger than the dimension of $\matr B_m$. This is the grid of interest which shall be analyzed in terms of congestion or $(n-1)$ stability. 

To connect the case of interest to the reference case, we introduce a third network configuration $\matr B_c$ where the switch or busbar couplers is closed, but where the nodes have \emph{not} been merged. In this configuration, the incident branches must be assigned to the node (busbar), where they are connected when the coupler opens. The two matrices $\matr B_c$ and $\matr B_o$ have the same dimension. The different configurations are shown for an elementary example in Fig.~\ref{fig:busbar-example}. 

Notably, the matrix $\matr B_c$ diverges in the limit $b_s \rightarrow \infty$, hence we have to replace it in the calculations to obtain a convergent final result. The inverse $\matr B_c^{-1}$ exists in the limit $b_s \rightarrow \infty$ and is singular. 


Our computation is based on the inverse matrices $\matr B^{-1}_{m,c,o}$. Once these matrices are known, we can easily compute the nodal phase angles $\vec \theta$ as well as the power transfer and the line outage distribution factors.  We assume that we explicitly compute $\matr B^{-1}_{m}$ once for the reference grid. Then we want to obtain $\matr B^{-1}_{o}$ for the grid of interested in an efficient way.  

We proceed via the intermediate grid configuration. The inverse matrix $\matr B^{-1}_{c}$ is obtained from $\matr B^{-1}_{m}$ by a simple padding procedure. That is, the rows and columns of the matrix $\matr B^{-1}_{m}$ corresponding to the coupled busbars are simply copied and inserted at the correct position. The only challenge for practical implementations is a correct bookkeeping. At this step, one also has to update the vectors $\vec p$ in a consistent way. Power injections must be assigned to the node where they shall be connected after the opening of the coupler.

Now we address how to compute $\matr B^{-1}_{o}$ from $\matr B^{-1}_{c}$. To this end we first assume that the busbar coupler $s$ is an ordinary branch with finite effective susceptance $b_s$. At this  stage the opening of the coupler is mathematially equivalent to a line outage. We then have to invoke the limit $b_s \rightarrow \infty$ to describe an ideal busbar coupler. To carry out this limit, we must modify the description of line outages.

The starting point is Eq.~\eqref{eq:sm-single-finite} with $\Delta b_s  = - b_s$ which yields
\begin{align}
    \matr B_o^{-1} &= \matr B_c^{-1}  +    
    b_s \matr B_c^{-1} \vec \nu_s  
    ( b_s - b_s^2  \vec \nu_s^\top  \matr B_c^{-1} \vec \nu_s )^{-1} 
    b_s \vec \nu_s^\top \matr B_c^{-1} .
    \label{eq:1coupler-finite}
\end{align}
We now have to invoke the limit $b_s \rightarrow \infty$ to model an ideal coupler, which proves difficult in the current formulation. For instance, $\matr B_c^{-1} \vec \nu_s$ vanishes while $b_s$ diverges, such that the product $b_s \matr B_c^{-1} \vec \nu_s$ is undefined in the limit.

We thus have to rewrite Eq.~\eqref{eq:1coupler-finite} in a more convenient form. To this end we derive the following matrix identities starting from the definition $\matr B_c - \matr B_o = b_s \vec \nu_s \vec \nu_s^\top$,
\begin{align}
    b_s \vec \nu_s^\top \matr B_c^{-1} 
    &= 
    \frac{1}{\vec \nu_s^\top \vec \nu_s} \vec \nu_s^\top b_s \vec \nu_s \vec \nu_s^\top \matr B_c^{-1} \nonumber \\
    &= 
   \frac{1}{\vec \nu_s^\top \vec \nu_s}  \vec \nu_s^\top \big[ \matr B_c - \matr B_o  \big] \matr B_c^{-1}  \nonumber \\
    &= 
    \frac{1}{\vec \nu_s^\top \vec \nu_s}  \vec \nu_s^\top \big[ \eye - \matr B_o \matr B_c^{-1} \big] .
    \label{eq:techlem1}
\end{align}
applying this identity repeatedly, we furthermore obtain
\begin{align}
    & b_s - b_s^2 \vec \nu_s^\top \matr B_c^{-1} \vec \nu_s 
    = 
    b_s - \frac{b_s}{\vec \nu_s^\top \vec \nu_s} \vec \nu_s^\top \big[ \eye - \matr B_o \matr B_c^{-1} \big] \vec \nu_s \nonumber \\
    & \qquad =
    \frac{b_s}{\vec \nu_s^\top \vec \nu_s}  \vec \nu_s^\top  \matr B_o  \matr B_c^{-1} \vec \nu_s 
    \nonumber \\ 
    & \qquad =
     \frac{1}{(\vec \nu_s^\top \vec \nu_s)^2} \vec \nu_s^\top \big[ \matr B_o- \matr B_o \matr B_c^{-1} \matr B_o \big]  \vec \nu_s \, .
     \label{eq:techlem2}
\end{align}
We see that the right-hand of the equations in Eq.~\eqref{eq:techlem1} and Eq.~\eqref{eq:techlem2} no longer contains the quantities $b_s$ and $\matr B_c^{-1} \vec \nu_s$ that diverge or tend to zero in the limit $b_s \rightarrow \infty$. 
The matrices $\matr B_o$ and $\matr B_c^{-1}$ are well defined such that the limit can be carried out without major problems. Hence we obtain the following results for the matrix $\matr B_o^{-1}$ describing the grid with the open coupler as well as the power transfer and line outage distribution factors.

Hence we rewrite Eq.~\ref{eq:1coupler-finite} for the matrix $\matr B_o^{-1} $ using Eq.~\eqref{eq:techlem1} and its transpose as well as Eq.~\eqref{eq:techlem2}.

\begin{lem}
The inverse grounded Laplacian of the grid with open coupler is given by
\begin{align}
    &  \matr B_o^{-1}  =  \matr B_c^{-1} 
    + \Big[  \vec \nu_s^\top \left( \matr B_o - \matr B_o \matr B_c^{-1} \matr B_o  \right) \vec \nu_s \Big]^{-1} 
    \nonumber \\
    & \quad \times  \left( \eye -  \matr B_c^{-1} \matr B_o \right) \vec \nu_s
    \vec \nu_s^\top \left( \eye - \matr B_o \matr B_c^{-1} \right) .
\end{align}
\label{cor:Bo-inverse-elements}
\end{lem}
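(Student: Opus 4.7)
My plan is to obtain the formula by direct substitution of the two technical identities \eqref{eq:techlem1} and \eqref{eq:techlem2} (together with the transpose of \eqref{eq:techlem1}) into the Sherman--Morrison expression \eqref{eq:1coupler-finite}, and then to pass to the limit $b_s \to \infty$. The key observation motivating this route is that, although the raw factors $b_s \matr B_c^{-1}\vec\nu_s$, $b_s\vec\nu_s^\top \matr B_c^{-1}$ and $b_s - b_s^2 \vec\nu_s^\top \matr B_c^{-1}\vec\nu_s$ are each singular in the limit, the identities \eqref{eq:techlem1} and \eqref{eq:techlem2} re-express them purely in terms of $\matr B_o$, $\matr B_c^{-1}$ and $\vec\nu_s$, all of which have finite limits. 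Hence the rewriting is essentially a bookkeeping exercise that eliminates the divergent quantities simultaneously.

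Concretely, I would first transpose \eqref{eq:techlem1}, using the symmetry of $\matr B_c$ and $\matr B_o$, to obtain $b_s \matr B_c^{-1}\vec\nu_s = (\vec\nu_s^\top\vec\nu_s)^{-1}(\eye - \matr B_c^{-1}\matr B_o)\vec\nu_s$. Substituting this on the left of the rank-one correction in \eqref{eq:1coupler-finite}, the right-hand version of \eqref{eq:techlem1} on the right, and \eqref{eq:techlem2} in place of the inverse scalar factor, the correction becomes
\begin{align*}
    &\frac{1}{\vec\nu_s^\top\vec\nu_s}(\eye - \matr B_c^{-1}\matr B_o)\vec\nu_s \\
    &\quad \times \frac{(\vec\nu_s^\top\vec\nu_s)^2}{\vec\nu_s^\top\bigl(\matr B_o - \matr B_o \matr B_c^{-1}\matr B_o\bigr)\vec\nu_s} \\
    &\quad \times \frac{1}{\vec\nu_s^\top\vec\nu_s}\vec\nu_s^\top(\eye - \matr B_o \matr B_c^{-1}).
\end{align*}
The two factors of $(\vec\nu_s^\top\vec\nu_s)^{-1}$ from the rewritten outer terms cancel exactly against the $(\vec\nu_s^\top\vec\nu_s)^2$ produced by \eqref{eq:techlem2}, leaving precisely the expression asserted in the lemma. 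Adding $\matr B_c^{-1}$ back yields the claim.

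I do not anticipate a serious technical obstacle beyond organising the substitution cleanly; the nontrivial work has already been done in proving \eqref{eq:techlem1} and \eqref{eq:techlem2}. The only point that deserves a brief check is that the scalar $\vec\nu_s^\top(\matr B_o - \matr B_o \matr B_c^{-1}\matr B_o)\vec\nu_s$ is nonzero, so that the inverse in the final formula is defined. By \eqref{eq:techlem2} this quantity equals $(\vec\nu_s^\top\vec\nu_s)^2(b_s - b_s^2\vec\nu_s^\top\matr B_c^{-1}\vec\nu_s)$ for any finite $b_s$, and nonvanishing of the right-hand side is exactly the condition for the original Sherman--Morrison step \eqref{eq:1coupler-finite} to be valid. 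Physically, it corresponds to the opening of the coupler not disconnecting the grid, and thus this condition is inherited from the well-posedness of the line-outage case treated earlier.
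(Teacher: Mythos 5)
Your proposal is correct and follows exactly the paper's own route: substituting Eq.~\eqref{eq:techlem1}, its transpose, and Eq.~\eqref{eq:techlem2} into Eq.~\eqref{eq:1coupler-finite}, cancelling the $(\vec\nu_s^\top\vec\nu_s)$ factors, and passing to the limit $b_s \to \infty$, which is immediate since $b_s$ no longer appears. Your additional remark on the nonvanishing of $\vec\nu_s^\top(\matr B_o - \matr B_o \matr B_c^{-1}\matr B_o)\vec\nu_s$ is a sensible check consistent with the paper's islanding criterion.
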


\begin{proof}
The result follows from Eq.~\eqref{eq:1coupler-finite} using Eq.~\eqref{eq:techlem1} and its transpose as well as Eq.~\eqref{eq:techlem2}. As the resulting expression does not contain $b_s$ explicitly, the limit $b_s \rightarrow \infty$ is straightforward.
\end{proof}

\begin{cor}
The PTDF matrix of the grid with the open busbar coupler is given by
\begin{align}
    \matr{PTDF}_o = \matr{PTDF}_c +
    \vec{bsdf}  \;
       \vec \nu_s^\top \left( \eye -  \matr B_o \matr B_c^{-1} \right) 
\end{align}
with the bus split distribution factor vector
\begin{align}
    \vec{bsdf} =& 
    \Big[  \vec \nu_s^\top \left( \matr B_o - \matr B_o \matr B_c^{-1} \matr B_o \right)  \vec \nu_s 
    \Big]^{-1}  \nonumber \\
    & \qquad \times \matr \Bd_o \matr E^\top
     \left( \eye -  \matr B_c^{-1} \matr B_o   \right) \vec \nu_s \, .
\end{align}
\end{cor}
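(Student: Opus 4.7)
The plan is to substitute the expression for $\matr B_o^{-1}$ from Lemma~\ref{cor:Bo-inverse-elements} into the definition $\matr{PTDF}_o = \matr \Bd_o \matr E^\top \matr B_o^{-1}$ and to read off the two terms of the claimed identity. Because the Lemma already packages everything in a form that no longer contains the divergent quantity $b_s$, nothing further has to be done to commute with the limit $b_s\to\infty$.

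First I would note that the Lemma presents $\matr B_o^{-1}$ as a sum of $\matr B_c^{-1}$ and a rank-one correction, so the left-multiplication by $\matr \Bd_o \matr E^\top$ distributes into a baseline term $\matr \Bd_o \matr E^\top \matr B_c^{-1}$ and a correction term. On every row $e \neq s$ the diagonal entries of $\matr \Bd_o$ and $\matr \Bd_c$ coincide, which immediately identifies the baseline with the corresponding row of $\matr{PTDF}_c = \matr \Bd_c \matr E^\top \matr B_c^{-1}$.

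Next I would extract the rank-one correction. The scalar $\bigl[\vec \nu_s^\top (\matr B_o - \matr B_o \matr B_c^{-1} \matr B_o) \vec \nu_s\bigr]^{-1}$ in front of the dyad pulls out as an overall prefactor, the right-hand factor $\vec \nu_s^\top (\eye - \matr B_o \matr B_c^{-1})$ is already the row appearing in the Corollary, and the remaining left-hand factor $\matr \Bd_o \matr E^\top (\eye - \matr B_c^{-1} \matr B_o) \vec \nu_s$ combined with that inverse scalar is exactly the stated $\vec{bsdf}$. Packaging these together yields the asserted decomposition.

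The only delicate point, and the candidate for the main obstacle, is the $s$-th row, where $b_s$ vanishes in $\matr \Bd_o$ but diverges in $\matr \Bd_c$, so $\matr \Bd_o \matr E^\top \matr B_c^{-1}$ and $\matr{PTDF}_c$ need not agree there. Since the coupler has been removed in the open configuration the $s$-th row of $\matr{PTDF}_o$ is trivially zero and carries no independent information; this row is to be understood as handled separately, in the same spirit as the row-$e$ caveat in the line-outage formula and the Kirchhoff-current-law argument used in the bus-merge case earlier in this section.
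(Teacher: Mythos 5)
Your proposal matches the paper's own proof: it simply substitutes the expression for $\matr B_o^{-1}$ from Lemma~\ref{cor:Bo-inverse-elements} into the definition $\matr{PTDF}_o = \matr \Bd_o \matr E^\top \matr B_o^{-1}$ and reads off the baseline and rank-one correction terms. Your extra remark about the $s$-th row (where $\matr \Bd_o$ and $\matr \Bd_c$ differ, so that row must be treated separately, and is in any case trivially zero in the open configuration) is a sound clarification of a point the paper leaves implicit, but it does not change the argument.
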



\begin{proof}
We start from Eq.~\eqref{eq:def-ptdf} for the grid with the open busbar coupler,
\begin{align*}
    \vec f_o = \underbrace{\matr \Bd_o \matr E^\top \matr B_o^{-1}}_{=: \matr{PTDF}_o } \vec p.
\end{align*}
Substituting the expression for $\matr B_o^{-1}$ from lemma \ref{cor:Bo-inverse-elements} the yields the result.
\end{proof}

\begin{cor}
The columns of the LODF matrix of the grid with the open busbar coupler are given by 
\begin{align}
     \matr{LODF}_{:,e} = 
      \left( \frac{1}{\scriptstyle 1- b_e  \vec \nu_e^\top  \matr B_o^{-1}  \vec \nu_e} 
            \matr \Bd_o  \matr E^\top \matr B_o^{-1}   \matr E 
     - \eye  \right) \vec u_e \vec u_e^\top
\end{align}
with
\begin{align*}
    & \matr E^\top \matr B_o^{-1} \matr E = \matr E^\top \matr B_c^{-1} \matr E 
    + \Big[  \vec \nu_s^\top \left(  \matr B_o - \matr B_o \matr B_c^{-1}  \matr B_o \right) \vec \nu_s \Big]^{-1} 
    \nonumber \\
    & \quad \times \matr E^\top \left( \eye -  \matr B_c^{-1} \matr B_o \right) \vec \nu_s
    \vec \nu_s^\top \left( \eye - \matr B_o \matr B_c^{-1} \right) \matr E. \\
    & \vec \nu_e^\top \matr B_o^{-1} \vec \nu_e = \vec \nu_e^\top \matr B_c^{-1} \vec \nu_e
    + \Big[  \vec \nu_s^\top \left( \matr B_o - \matr B_o \matr B_c^{-1} \matr B_o \right)  \vec \nu_s \Big]^{-1} 
    \nonumber \\
    & \quad \times \vec \nu_e^\top \left( \eye -  \matr B_c^{-1} \matr B_o \right) \vec \nu_s
    \vec \nu_s^\top \left( \eye - \matr B_o \matr B_c^{-1} \right) \vec \nu_e.
\end{align*}
\end{cor}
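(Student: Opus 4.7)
The plan is to treat this as a two-step reduction. The first step is conceptual: once $\matr B_o$ is available as the nodal susceptance matrix of a well-defined grid (the grid with the coupler opened and the two busbars represented as separate nodes), the opening of any further branch $e$ in that grid is an ordinary line outage. Hence I would apply Eq.~\eqref{eq:lodf-def} verbatim with $\matr B_r$ replaced by $\matr B_o$, $\matr \Bd_r$ replaced by $\matr \Bd_o$, and the modified diagonal $\matr \Bd_m$ (which agrees with $\matr \Bd_o$ except that the $e$th diagonal entry is zero). This yields the stated formula for $\matr{LODF}_{:,e}$ with no further work, provided we can evaluate the two building blocks $\matr E^\top \matr B_o^{-1} \matr E$ and $\vec \nu_e^\top \matr B_o^{-1} \vec \nu_e$ in a way that stays finite when $b_s \to \infty$.

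The second step is to produce the expanded forms of those two building blocks. I would substitute the expression for $\matr B_o^{-1}$ from Lemma~\ref{cor:Bo-inverse-elements} into the scalar quantity $\vec \nu_e^\top \matr B_o^{-1} \vec \nu_e$ (left-multiplying by $\vec \nu_e^\top$ and right-multiplying by $\vec \nu_e$) and into the matrix $\matr E^\top \matr B_o^{-1} \matr E$ (left-multiplying by $\matr E^\top$ and right-multiplying by $\matr E$). Both are linear operations, so the rank-one correction term in the lemma simply gets sandwiched between the corresponding factors, producing precisely the two displayed identities. No cancellation, no reordering, and no further limit arguments are required beyond those already handled inside the lemma.

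The main obstacle, such as it is, is essentially cosmetic: one needs to be careful that the incidence matrix $\matr E$ and the diagonal matrix $\matr \Bd_o$ used in this corollary refer to the open-coupler grid (in which the two busbars have been split into separate rows of $\matr E$), and that branch $e$ refers to a branch in that same topology, so that $\vec \nu_e = \matr E \vec u_e$ is the standard terminal-injection vector. Given this consistent bookkeeping, the corollary follows immediately by combining Eq.~\eqref{eq:lodf-def} (applied with $\matr B_o$ as reference) with the two sandwich identities obtained from Lemma~\ref{cor:Bo-inverse-elements}.
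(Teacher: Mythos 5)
Your proposal is correct and follows exactly the paper's route: apply the LODF definition in Eq.~\eqref{eq:lodf-def} with the open-coupler grid $\matr B_o$ playing the role of the reference grid, then substitute the expression for $\matr B_o^{-1}$ from Lemma~\ref{cor:Bo-inverse-elements} into $\matr E^\top \matr B_o^{-1} \matr E$ and $\vec \nu_e^\top \matr B_o^{-1} \vec \nu_e$. Your added bookkeeping remark about $\matr E$ and $\matr \Bd_o$ referring to the split topology is a sensible clarification but not a departure from the paper's argument.
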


\begin{proof}
This result follows from the definition of the line outage distribution factors in Eq.~\eqref{eq:lodf-def}, replacing the reference grid $\matr B_r^{-1}$ by the grid with the open busbar coupler $\matr B_o^{-1}$ using lemma \ref{cor:Bo-inverse-elements}.
\end{proof}

\subsection{Alternative treatment of bus splits}

The effect of a bus split can also be computed in a different way using ideas of~\cite{kim1985contingency}. In this approach one uses another intermediate grid $\matr B_c$ topology with an additional idle bus. That is, we add a bus that is not connected to any other bus and has a power injection of zero. The bus split then corresponds to rewiring branches from the existing bus to the new one. These topology changes can be treated equivalently to line outages or closings. 

Let us illustrate this approach using the same example as in Fig.~\ref{fig:busbar-example}. The rewiring is described by the change of the grounded Laplacian
\begin{align*}
    & \Delta \matr B = \matr B_o - \matr B_c \\
    &= \begin{pmatrix}
        0          & 0 & 0          & b_{2,5} & 0 & -b_{2,5} \\
        0          & 0 & 0          & b_{3,5} & 0 & -b_{3,5} \\
        0          & 0 & 0          & 0 & 0 & 0 \\
         b_{2,5} & b_{3,5} & 0 & -b_{2,5} - b_{3,5} & 0 & 0 \\
        0          & 0 & 0          & 0 & 0 & 0 \\
        -b_{2,5} & -b_{3,5} & 0 & 0 & 0 & b_{2,5} + b_{3,5} \\
    \end{pmatrix}\\
    &= \frac{1}{c}\begin{pmatrix}
        b_{2,5} &  b_{2,5} \\
        b_{3,5} &  b_{3,5} \\
        0 &  0 \\
       -c &  0 \\
        0 &  0 \\
        0 & -c
    \end{pmatrix} \cdot \begin{pmatrix}
        -b_{2,5} & -b_{3,5} & 0 & c & 0 & 0 \\
         b_{2,5} &  b_{3,5} & 0 & 0 & 0 & -c
    \end{pmatrix}
\end{align*}
where we used the short hand $c = b_{2,5} + b_{3,5}$.
We can then compute the inverse for the open topology as \begin{align*}
    \matr B_o^{-1} = ( \matr B_c +  \Delta \matr B)^{-1}
\end{align*}
using again the Woodbury matrix identity.


\section{Detecting islanding}

A line outage or busbar split may split up a grid into mutually disconnected islands. Here we briefly discuss how islanding can be detected from the (inverse) grounded Laplacian. This is especially important in topology optimization, where a large number of possible busbar splits are considered. In this case we need a simple criterion to exclude busbar splits that lead to islanding.

Islanding can be detected from the Laplacian as follows. One can show that the number of zero eigenvalues of the Laplacian matrix exactly equals the number of connected components of a graph~\cite{west2001introduction}. Grounding the Laplacian removes one of the zero eigenvalues such that we obtain the following statement: A grid is disconnected if and only if the grounded Laplacian has a zero eigenvalue. This condition is equivalent to a vanishing determinant, $\det(\matr B) = 0$. 

We first consider a line outage described by a change in the grounded Laplacian
\begin{align*}
    \matr B_r &\rightarrow \matr B_m  = \matr B_r - b_e \vec \nu_e \vec \nu_e^\top .
\end{align*}
We assume that the grid is connected before the outage such that $\det(\matr B_r) \neq 0$. Using the matrix determinant lemma, we can write the determinant after the outage as
\begin{align*}
    \det(\matr B_m) =  \left( 1 - b_e \vec \nu_e^\top \matr B_r^{-1} 
    \vec \nu_e  \right)  \det(\matr B_r).
\end{align*}
Hence we find the following criterion: The failure of a line $e$ leads to an islanding if 
\begin{align}
    1 - b_e \vec \nu_e^\top \matr B_r^{-1} \vec \nu_e = 0.
\end{align}

This criterion can be generalized to the opening of a busbar coupler or switch $s$. Using the notation of Sec.~\ref{sec:bsdf} and Eq.~\eqref{eq:techlem2}, we have
\begin{align*}
    \det(\matr B_o) &=     
    \left( 1 - b_s \vec \nu_s^\top \matr B_c^{-1} 
    \vec \nu_s  \right)  \det(\matr B_c) \\
    &=   \vec \nu_s^\top \big[ \matr B_o- \matr B_o \matr B_c^{-1} \matr B_o \big]  \vec \nu_s 
    \frac{1}{(\vec \nu_s^\top \vec \nu_s)^2} \; 
    \frac{\det(\matr B_c)}{b_s}
\end{align*}
The factor ${\det(\matr B_c)}/{b_s}$ remains finite in the limit $b_s  \rightarrow \infty$. It is non-zero if the original grid topology is connected. Hence we find that the opening of a busbar coupler or switch leads to an islanding if and only if
\begin{align*}
  \vec \nu_s^\top \big[ \matr B_o- \matr B_o \matr B_c^{-1} \matr B_o \big]  \vec \nu_s = 0.
\end{align*}
We note that this criterion can be extended to the case of multiple outages or multiple busbar splits as treated in the next chapter, but becomes highly sensitive to numerical errors.


\section{Multiple branch modifications}
\label{sec:multiple}

In this section we generalize the above results to multiple grid modification, including multiple line outages and multiple switches or busbar couplers. In the latter case we will introduce a convenient notation to evaluate different settings of switches or couplers.

\subsection{Finite topology modifications and line outages}

We start with the case of finite grid modifications or line outages. The resulting expressions are commonly referred to as 
generalized line outage distribution factors (GLODF) or multiple line distribution factors (MODF). 

Let $\EL = ( e_1, e_2 , \ldots, e_M )$ be a list of branches that are modified and $\Delta b_i$ the change in the susceptance of branch $e_i$. The change of the diagonal matrix $\Bd$ and the grounded Laplacian matrix can thus be written as
\begin{align*}
    &\matr \Bd_r \rightarrow \matr \Bd_m = \matr \Bd_r + \sum_i \Delta b_i \, \vec u_{e_i} \vec u_{e_i}^\top. \\
    &\matr B_r \rightarrow \matr B_m  = \matr B_r + \sum_i \Delta  b_i \, \vec \nu_{e_i} \vec \nu_{e_i}^\top  \\
    &= \matr B_r + \underbrace{\begin{pmatrix} \vec \nu_{e_1} & \ldots & \vec \nu_{e_M} \end{pmatrix}}_{=: \matr U}
    \underbrace{\begin{pmatrix} \Delta b_{1} & & 0 \\ & \ddots & \\ 0 & & \Delta b_{M} \end{pmatrix}}_{=: \matr A}
    \begin{pmatrix} \vec \nu_{e_1}^\top \\ \ldots \\ \vec \nu_{e_M}^\top \end{pmatrix} .
\end{align*}
The inverse of the grounded Laplacian in the modified grid can be computed using the Woodbury matrix identity 
\begin{align}
    \matr B_m^{-1} &= (\matr B_r+ \matr U \matr A \matr U^\top )^{-1}  
    \nonumber \\
    &= \matr B_r^{-1}   -  \matr B_r^{-1}
    \matr U \left( \matr A^{-1} + \matr U^\top \matr B_r^{-1} \matr U \right)^{-1}  \matr U^\top  \matr B_r^{-1} .
    \label{eq:sm-multiple-finite}
\end{align}
The PTDFs of the modified grid then read
\begin{align}
    PTDF^{(m)}_{a,i} = (b_a + \Delta b_a ) \vec \nu_a^\top \matr B_m^{-1} \vec u_i.
\end{align}
We remark that this formulae involves the inverse of an $M \times M$ matrix. In most applications we have $M \ll N_e$ such that this update is much more efficient than computing $\matr B_m^{-1}$ directly. In some cases, such as $(n-2)$ contingency analysis, one can even carry out the inverse analytically~\cite{kaiser2020collective}.

\subsection{Bus Merge Distribution Factors}

We now generalize the above results to describe the closing of switches or busbar couplers. As before, we summarize these flexible elements in a list  
$\EL = ( e_1, e_2 , \ldots, e_M )$.  An ideal switch $e_i$ is is treated as a branch with two possible values of the susceptance: $b_i = 0$ in the open state and $b_i \rightarrow \infty$ in the closed state. 
Starting from the results on finite topology changes in Eq.~\eqref{eq:sm-multiple-finite}, we have to evaluate the limit $b_i \rightarrow \infty$. Furthermore, we will introduce a convenient notation to treat different settings of the switches or busbar couplers. 

To proceed, we assume that the network is always connected, even if all flexible elements are open or out of operation. That is, we compute $\matr B_r$ for the grid where $\Delta b_i = 0$ for all $e_i \in \EL$. Furthermore, we define the matrix $\matr K = \matr U^\top \matr B_r^{-1} \matr U$ and its diagonal part $\matr K_d = \mbox{diag}(\matr K)$ as a convenient short-hand. We then define the variables 
\begin{align}
    \xi_i = \frac{b_i \vec \nu_{e_i}^\top \matr B_r^{-1} \vec \nu_{e_i}}{1 + b_i \vec \nu_{e_i}^\top \matr B_r^{-1} \vec \nu_{e_i} } \, .
    \label{eq:xi-a}
\end{align}
and summarize them in in the diagonal matrix
\begin{align*}
    \matr \Xi &=  \mbox{diag} \left(\xi_{1} , \ldots , \xi_{M} \right).
\end{align*}
At this point we can easily carry out the limit $b_i \rightarrow \infty$ if the switch $e_i$ is closed and obtain
\begin{align*}
    \xi_i = \left\{ \begin{array}{l l}
        0 \; & \mbox{if switch $e_i$ is open}, \\
       +1 \; & \mbox{if switch $e_i$ is closed}.
    \end{array} \right.
\end{align*}
Hence, the matrix $\matr \Xi$ is always finite with zeros or ones on the diagonal. 

We can now proceed with Eq.~\eqref{eq:sm-multiple-finite}, aiming to replace the matrix $\matr A$ with the more well-behaved matrix $\matr \Xi$. To this end, we rewrite Eq.~\eqref{eq:xi-a} as
\begin{align}
    \matr \Xi &= \matr A  \matr K_d  (\eye + \matr A \matr K_d)^{-1} \nonumber \\
    \Leftrightarrow \qquad 
    \matr A &=  \matr K_d^{-1} \matr \Xi  (\eye - \matr \Xi)^{-1} \, .
    \label{eq:A-Xi}
\end{align}
Equation~\eqref{eq:sm-multiple-finite} can thus be written as
\begin{align}
    & \matr B_m^{-1} = \matr B_r^{-1} - \matr B_r^{-1} \matr U \left( \matr A^{-1} + \matr K \right)^{-1} \matr U^\top \matr B_r^{-1} \nonumber \\
    &\quad = \matr B_r^{-1} - \matr B_r^{-1} \matr U \left( \matr K_d \matr \Xi^{-1} + \matr K - \matr K_d \right)^{-1} \matr U^\top \matr B_r^{-1} \nonumber \\
    &\quad = \matr B_r^{-1} - \matr B_r^{-1} \matr U \matr \Xi \left( \matr K_d +  ( \matr K - \matr K_d) \matr \Xi \right)^{-1} \matr U^\top \matr B_r^{-1}  .
    \label{eq:switch-zeta}
\end{align}
This expression contains only the matrix $\matr \Xi$ and not the matrix $\matr A$ such that we can easily carry out the limit $b_i \rightarrow \infty$ if the switch $e_i$ is closed. Using Eq.~\eqref{eq:switch-zeta}, we can directly read of the PTDFs in the modified grid for any branch $a \notin \EL$,
\begin{align}
    PTDF^{(m)}_{a,i} = b_a \vec \nu_a^\top \matr B_m^{-1} \vec u_i.
\end{align}


\subsection{Bus Split Distribution Factors}

We will now generalize the bus split distribution factors introduced in Sec.~\ref{sec:bsdf} to the case of multiple switches or busbar couplers. As before, the matrix $\matr B_m$ describes the grid where all switches or busbar couplers are closed and have been merged into a single node each and $\matr B_c$ where all switches and busbar couplers are closed, but where the nodes have \emph{not} been merged. We obtain $\matr B_c^{-1}$ from $\matr B_m^{-1}$ by copying the respective rows and columns. 

We now compute the impact of opening the switches or busbars summarized in the list $\EL = ( e_1, e_2 , \ldots, e_M )$. The grounded Laplacian matrix of the open grid is given by
\begin{align*}
   \matr B_o
    &= \matr B_c + \underbrace{\begin{pmatrix} \vec \nu_{e_1} & \cdots & \vec \nu_{e_M} \end{pmatrix}}_{=: \matr U}
    \underbrace{\begin{pmatrix} - b_{1} & & 0 \\ & \ddots & \\ 0 & & - b_M \end{pmatrix}}_{=: \matr A}
    \begin{pmatrix} \vec \nu_{e_1}^\top \\ \vdots \\ \vec \nu_{e_M}^\top \end{pmatrix} .
\end{align*}
with $b_i \rightarrow \infty$ for an ideal switch or busbar coupler. The Woodbury matrix identity yields
\begin{align}
    \label{eq:bsdf2-woodbury}
    & \matr B_o^{-1} = (\matr B_c+ \matr U \matr A \matr U^\top )^{-1}  
    \nonumber \\
    &= \matr B_c^{-1}   -  \matr B_c^{-1}
    \matr U \left( \matr A^{-1} + \matr U^\top \matr B_c^{-1} \matr U \right)^{-1}  \matr U^\top  \matr B_c^{-1}  
    \nonumber \\
     &= \matr B_c^{-1} -  \matr B_c^{-1}
    \matr U \matr A \left( \matr A + \matr A \matr U^\top \matr B_c^{-1} \matr U \matr A \right)^{-1} \matr A \matr U^\top  \matr B_c^{-1} .
\end{align}

As before, we need to rewrite the above expression to evaluate the limit $b_i \rightarrow \infty$. The two relations \eqref{eq:techlem1} and \eqref{eq:techlem2} are generalized to
\begin{align*}
    &\matr A \matr U^\top \matr B_c^{-1} =
    (\matr U^\top \matr U)^{-1} \matr U^\top \left(\matr B_o \matr B_c^{-1} - \eye \right) \\
    &\matr A + \matr A \matr U^\top \matr B_c^{-1} \matr U \matr A \\
    & \qquad =  (\matr U^\top \matr U)^{-1} \matr U^\top 
    \left( \matr B_o \matr B_c^{-1} \matr B_o- \matr B_o \right) 
    \matr U (\matr U^\top \matr U)^{-1} \, .
\end{align*}
Substituting these relations into Eq.~\eqref{eq:bsdf2-woodbury} yields
\begin{align}
    & \matr B_o^{-1} = \matr B_c^{-1}  +
    \left(\eye - \matr B_c^{-1} \matr B_o \right) \matr U  
    \\
    & \; \times \left[ \matr U^\top (\matr B_o - \matr B_o \matr B_c^{-1} \matr B_o) \matr U  \right]^{-1}
    \matr U^\top \left(\eye - \matr B_o \matr B_c^{-1} \right).
    \nonumber
\end{align}
We note that a substation can consist of several physical breakers or disconnectors. In this case we do not have to model each element separately as a switch, reducing the size of the matrix to be inverted. For example, any bus split can be included by modeling only the busbar coupler explicitly~\cite{van2023bus}.

Finally, given the inverse of the grounded Laplacian, one can readily compute the power transfer and line outage distribution factors.

\section{Summary and Outlook}

This article introduces a unified algebraic approach to distribution factors in linear power flow. Distribution factors are common tools in power system analysis and operation and a unified treatment can improve the applicability of these tools. The algebraic formulation facilitates the generalization of distribution factors to complex topology modifications, such as multiple line outages or bus splits. The common theme is the formulation of topology modifications as low-rank updates and the application of the Woodbury matrix identity. Similar sparsity oriented techniques have been discussed in~\cite{alsac1983sparsity}.

Our results are particularly useful in topology optimization~\cite{subramanian2021exploring}, when multiple grid topologies have to be evaluated. We show how to efficiently update the line outage distribution factors after branch modifications, bis splits and bus mergers, thus enabling an efficient $(n-1)$ security analysis. We have also shown how to efficiently detect whether a bus split leads to islanding. In the future, our algebraic treatment may be applied to generalized linear approximations~\cite{Zhang2013,Yang2019} to treat aspects of reactive power and voltage stability.

\acknowledgments 

We thank C.~Hartmann and M.~Titz for helpful comments on the manuscript. D.W. gratefully acknowledges support by the German federal ministry of research and education (Bundeministerium für Bildung und Forschung, BMBF) via the grant number 03SF0751.


%

\end{document}